\documentclass[11pt]{article}

\usepackage[a4paper,margin=1in]{geometry}
\usepackage[T1]{fontenc}
\usepackage[utf8]{inputenc}
\usepackage{lmodern}
\usepackage{microtype}
\usepackage{amsmath,amssymb,amsthm,mathtools}
\usepackage{array,booktabs,longtable}
\usepackage{enumitem}
\usepackage{xcolor}
\usepackage{listings}
\usepackage{fancyvrb}
\usepackage[hidelinks]{hyperref}

\newtheorem{theorem}{Theorem}[section]
\newtheorem{proposition}[theorem]{Proposition}

\theoremstyle{definition}

\newtheorem{remark}[theorem]{Remark}

\newcommand{\F}{\mathbb F}
\newcommand{\Sym}{\operatorname{Sym}}
\newcommand{\GL}{\operatorname{GL}}
\newcommand{\Span}{\operatorname{span}}

\lstdefinestyle{pythonstyle}{
	language=Python,
	basicstyle=\ttfamily\scriptsize,
	breaklines=true,
	breakatwhitespace=false,
	showstringspaces=false,
	columns=fullflexible,
	frame=single,
	numbers=left,
	numberstyle=\tiny,
	tabsize=4,
	captionpos=b
}

\title{Counterexamples to Conjectures of Wehlau on Noether Numbers}

\author{Muhammad Fazeel Anwar\\
	\small Department of Mathematics, Sukkur IBA University, Sukkur, Pakistan. \\ fazeel.anwar@iba-suk.edu.pk}

\begin{document}

\maketitle

\begin{abstract}
		Let $G$ be a finite group and let $V$ be a finite-dimensional $G$-module over a field $k$. We construct explicit counterexamples in characteristic $2$ to several questions and conjectures of Wehlau concerning Noether numbers. For the $2$-group $G=D_8$, we exhibit a submodule $U\subseteq V$, with $\dim_kU=5$ and $\dim_kV=6$, such that $\beta\bigl(k[U]^G\bigr)=6>5=\beta\bigl(k[V]^G\bigr)$, thereby disproving submodule monotonicity. Writing $X=V^*$ and $Q=U^*$, the corresponding nonsplit exact sequence $0\longrightarrow k\longrightarrow X\longrightarrow Q\longrightarrow0$
		also satisfies $\beta\bigl(k[Q]^G\bigr)=8>6=\beta\bigl(k[Q^*]^G\bigr)$
		and $\beta\bigl(k[Q]^G\bigr)=8>5=\beta\bigl(k[X]^G\bigr)$. Thus the modular Noether number need not be invariant under duality, and quotient monotonicity also fails. Notably, the basic counterexamples already occur for $2$-groups in defining characteristic. The constructions remain valid over every field of characteristic $2$, and the $D_8$ conclusions propagate by inflation to every finite group admitting $D_8$ as a quotient.
	\end{abstract}

	\section{Introduction}
	Let $V$ be a finite-dimensional vector space over a field $k$ and let $G$ be a finite subgroup of $\GL(V)$. We choose a basis $\{z_0,\ldots,z_n\}$ for the dual space $V^*$. The action of $G$ on $V$ induces a natural action on $V^*$, which extends to an action on the symmetric algebra $k[V]=\Sym(V^*)=k[z_0,\ldots,z_n]$. We denote the ring of invariants by $k[V]^G$. A representation of $G$ is said to be modular if the characteristic of $k$ divides $|G|$. For a finitely generated graded algebra $R=\bigoplus_{i=0}^{\infty}R_i$, we denote by $R_{+}=\bigoplus_{i>0}R_i$ its ideal of positive-degree elements. An element of $R$ is called decomposable if it lies in $(R_{+})^2$. The Noether number of $R$, denoted by $\beta(R)$, is the largest degree of a homogeneous indecomposable element of $R$ \cite{ShWe}. When the field and group are fixed, we write $\beta(V)=\beta(k[V]^G)$.

	Emmy Noether \cite{Noe} proved that $\beta(V)\leq |G|$ when $\operatorname{char}(k)=0$ or $\operatorname{char}(k)>|G|$. Fleischmann \cite{Fl} and Fogarty \cite{F} subsequently proved the same bound whenever the characteristic of $k$ does not divide $|G|$. In the modular case, G\"obel \cite{G} proved that $\beta(V)\leq \binom{\dim V}{2}$ when $V$ is a permutation module. Hughes and Kemper \cite{HK} obtained degree bounds for arbitrary modular representations of a cyclic group of prime order.
	
	For a $G$-submodule $U$ of $V$, Wehlau \cite{OP} conjectured that $\beta(U)\leq\beta(V)$. Shank and Wehlau \cite{ShWe} proved the conjecture for the cyclic group of prime order over a field of the same characteristic. Chen and Zeng \cite{NP} recently gave a different proof. Zhang and Chen \cite{DG} proved the conjecture for dihedral groups of order $2p$ in characteristic $p>2$, and for dihedral groups of order $2m$ in characteristic $2$ when $m$ is odd. Wehlau \cite{OP} also asked whether quotient modules satisfy $\beta\bigl(k[V/U]^G\bigr)\leq\beta\bigl(k[V]^G\bigr)$.

	For $j\geq1$, the generalized Noether number is
	\[
	\beta_j(G,W)=
	\operatorname{topdeg}
	\left(
	\frac{\bigl(k[W]^G\bigr)_+}
	{\bigl(\bigl(k[W]^G\bigr)_+\bigr)^{j+1}}
	\right),
	\]
	so that $\beta_1(G,W)=\beta(k[W]^G)$ \cite{CD}.

	In this paper we take the $2$-group $D_8$ over $k=\mathbb F_2$ and construct a six-dimensional representation $V$ together with a five-dimensional subrepresentation $U$. The same construction also produces failures of duality invariance and quotient monotonicity. Our main results are the following.

	\begin{theorem}\label{thm:main}
		For $k=\F_2$ and $G=D_8$, there are $G$-modules $U\subseteq V$ with $\dim_kU=5$ and $\dim_kV=6$ such that $\beta\bigl(k[U]^G\bigr)=6>5=\beta\bigl(k[V]^G\bigr)$. Consequently, the submodule-monotonicity conjecture of Wehlau is false.
	\end{theorem}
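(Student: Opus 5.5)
The plan is to prove Theorem~\ref{thm:main} by an explicit construction followed by a certified computation of both invariant rings.

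\textbf{Construction.} Take $G=D_8=\langle \sigma,\tau\mid \sigma^4=\tau^2=1,\ \tau\sigma\tau=\sigma^{-1}\rangle$ and write down explicit matrices for $\sigma,\tau$ acting on $V^*=X$ with basis $z_0,\dots,z_5$. The abstract tells us what shape to aim for: there should be a nonsplit extension $0\to k\to X\to Q\to0$ with $X=V^*$ and $Q=U^*$. So $z_0$ should span a trivial submodule of $X$, and $U\subseteq V$ is the annihilator of $kz_0$. Then $k[U]=k[X/kz_0]=k[z_1,\dots,z_5]$, with the induced action. I would choose $Q$ to be a five-dimensional indecomposable (or near-indecomposable) module, for instance built from the regular-type module of $\langle\sigma\rangle\cong C_4$ glued to a $\tau$-twisted piece. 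The extension class in $H^1(G,Q^*)$ should be nonzero, so that $z_0$ does not split off. First I would verify that the matrices satisfy the defining relations of $D_8$, that $kz_0$ is stable, and that the sequence does not split. Nonsplitting can be checked by showing that no $G$-invariant complement to $kz_0$ exists; this is a finite linear-algebra check over $\F_2$.

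\textbf{Computing the two invariant rings.} For each of $k[U]^G=k[z_1,\dots,z_5]^G$ and $k[V]^G=k[z_0,\dots,z_5]^G$, I would proceed as follows.
\begin{enumerate}[label=(\roman*)]
\item Find a homogeneous system of parameters. Orbit products $N(z_i)=\prod_{g\in G/\mathrm{Stab}}gz_i$ work, for $z_i$ running over a flag adapted to the socle series.
\item Produce candidate generators in each degree using transfers, norms and orbit sums of monomials.
\item Certify that the generators are complete. The cleanest tool is the Hilbert series: compute it either from the candidate generators plus a Gröbner basis of their relations, or by Molien-type reasoning. Since we are in the modular case, Molien is unavailable, so instead compare against the space of invariants computed degree by degree (a linear fixed-point computation in each $k[\cdot]_d$). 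Then use the hsop to bound the degree of generation. A hypersurface or complete-intersection presentation, if it occurs, gives an explicit upper bound on the degrees of secondary generators.
\item Compute $\beta$ degree by degree as $\dim(R_+)_d-\dim\bigl((R_+)^2\bigr)_d$, which is linear algebra over $\F_2$ for each degree up to the certified generation bound.
\end{enumerate}
I expect this to yield an indecomposable invariant of degree $6$ in $k[U]^G$ and no indecomposables above degree $5$ in $k[V]^G$.

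\textbf{Main obstacle.} The hard step is certifying the upper bound $\beta(k[V]^G)\le 5$: we must show that every invariant of degree $\ge 6$ in six variables is decomposable. For this I would aim for a proof that $k[V]^G$ is generated in degrees $\le5$ using an explicit hsop and a module basis over it. If the ring is Cohen--Macaulay, the Hilbert series determines the secondary degrees, and the largest secondary degree is then bounded by $\sum(\deg f_i-1)$ minus the $a$-invariant shift. If it is not Cohen--Macaulay, I would fall back on a direct check: show that $R$ is generated up to the degree $\sum_i(\deg f_i-1)$ of the hsop, which is a known general bound for $\beta$ in terms of an hsop (Broer/Symonds-type). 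Combining this with a finite degree-by-degree computation over $\F_2$ would settle it.

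The intuition for why the degree-$6$ generator of $k[U]^G$ becomes decomposable in $k[V]^G$ is the following. It lifts to an invariant of $k[V]$ that factors through products involving the new trivial variable's partner. Concretely, a degree-$6$ invariant in $k[U]^G$ should lift to $F+z_0\,h$ with $F$ decomposable, where $h$ is an invariant of degree $5$ that does not reduce modulo $z_0$. Exhibiting this identity explicitly gives a human-checkable explanation of the phenomenon. Finally, the $\ge$ direction $\beta(k[U]^G)\ge6$ just needs one explicit degree-$6$ invariant, together with a check that it is not in $(R_+)^2$ in degree $6$.
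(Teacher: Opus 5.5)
\textbf{Gap: the example is never constructed.} Your certification pipeline is essentially the paper's. You compute invariants degree by degree as $\ker(r-1)\cap\ker(s-1)$, take an hsop of orbit products, use Symonds' bound $\sum_i(d_i-1)$ as the cutoff, and then compute $\dim R_d/(R_+^2)_d$ degree by degree. So the method is not the problem. The gap is that Theorem~\ref{thm:main} is a pure existence statement whose entire content is one specific pair $U\subseteq V$, and you never produce it.

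The paper fixes explicit generators $r,s$ on $X=V^*$ in \eqref{eq:X-action-table}, with $z_0$ fixed. It sets $Q=X/kz_0$ and $U=Q^*=(kz_0)^\perp\subseteq V$, and then certifies everything:
\begin{itemize}
\item for $k[V]^G$, an hsop of degrees $1,1,2,2,4,8$ with cutoff $12$ (Proposition~\ref{prop1}), giving top indecomposable degree $5$;
\item for $k[U]^G$, an hsop of degrees $1,1,2,4,4$ with cutoff $7$ (Proposition~\ref{prop2}), giving top indecomposable degree $6$.
\end{itemize}

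Your recipe (a regular-type $\langle\sigma\rangle$-module glued to a $\tau$-twisted piece with nonzero extension class) is not a construction. It does not even describe the paper's example: $r$ acts on the paper's $Q$ with Jordan blocks of sizes $3$ and $2$, so there is no regular $C_4$-summand. More importantly, nothing in the recipe forces a violation. Nonsplitting is necessary, since if $V=U\oplus W$ then $k[U]^G$ is a graded retract of $k[V]^G$, which forces $\beta(U)\le\beta(V)$. But it is far from sufficient. Your sentence ``I expect this to yield an indecomposable invariant of degree $6$\dots'' is exactly the claim that must be established, by exhibiting a concrete example and carrying out the computation.

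\textbf{Your heuristic explanation is backwards.} Restriction $\rho\colon k[V]\to k[U]$ (setting $z_0=0$) is a ring homomorphism sending $(k[V]^G)_+$ into $(k[U]^G)_+$. Because $k[V]^G$ is generated in degrees $\le5$, every element of $(k[V]^G)_6$ lies in $((k[V]^G)_+)^2$. Hence it restricts into $((k[U]^G)_+)^2$.

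So the degree-$6$ indecomposable $f_9$ of Proposition~\ref{prop2} has no $G$-invariant lift to $k[V]$ at all. An identity $F+z_0h$ with $F$ decomposable and $h$ invariant cannot produce it: $z_0h$ is itself decomposable and restricts to $0$, and $F$ restricts to a decomposable element.

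The actual mechanism is that $\rho\colon k[V]^G\to k[U]^G$ fails to be surjective in degree $6$. In the notation of Proposition~\ref{prop2}, its image there has dimension at most $\dim (S_+^2)_6=36<37=\dim S_6$. This failure can only occur because $U\subseteq V$ does not split: in the split case $\rho$ has a section and is surjective. That non-surjectivity is what you would need to exhibit for a hand-checkable explanation. The paper does not attempt one; it relies entirely on the certified computation.
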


	\begin{theorem}\label{thm:duality}
		For the modules in Theorem~\ref{thm:main}, put $X=V^*$ and $Q=U^*$. Then $\beta\bigl(k[X]^G\bigr)=\beta\bigl(k[X^*]^G\bigr)=5$, whereas $\beta\bigl(k[Q]^G\bigr)=8 \, \text{and}\, \beta\bigl(k[Q^*]^G\bigr)=6$. In particular, $\beta\bigl(k[Q]^G\bigr)=8>6=\beta\bigl(k[Q^*]^G\bigr)$, so the modular Noether number is not invariant under duality.
	\end{theorem}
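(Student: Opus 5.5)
Since $X^*=V^{**}\cong V$ and $Q^*\cong U$ canonically, two of the four numbers come from Theorem~\ref{thm:main}: $\beta(k[X^*]^G)=\beta(k[V]^G)=5$ and $\beta(k[Q^*]^G)=\beta(k[U]^G)=6$. What remains is to compute $\beta(k[X]^G)=\beta(k[V^*]^G)$ and $\beta(k[Q]^G)=\beta(k[U^*]^G)$. I will fix generators $r,s$ of $D_8$ with $r^4=s^2=(sr)^2=1$. I will write down the explicit $6\times 6$ matrices for $V$ used in Theorem~\ref{thm:main}, arranged so that $U$ is spanned by the first five basis vectors. Transposing the inverses then gives the matrices on $X=V^*$. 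Dually, $U^\perp\cong(V/U)^*$ is a trivial line $k\subseteq X$ and $Q=X/k$, which is the sequence $0\to k\to X\to Q\to 0$ from the abstract. The sequence is nonsplit because $U$ is not a direct summand of $V$; this can be read off directly from the matrices.

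For each of $X$ and $Q$ the computation has three steps.
\begin{enumerate}
\item Obtain a generating set of the invariant ring $k[W]^G$. Since $G$ is a $2$-group in characteristic $2$, the ring is usually not polynomial. I will first find a homogeneous system of parameters by taking orbit products (norms) of suitable linear forms, that is $N(z)=\prod_{g\in G/\operatorname{Stab}(z)} gz$. I will then compute the Hilbert series of $k[W]^G$ by a Molien-type argument. This is not Molien's formula, which fails in the modular case; instead I will count invariants degree by degree, or use the fact that $\dim_k(k[W]^G)_d$ is the number of fixed points of the linear action on $k[W]_d$, computed by direct linear algebra.
\item Find secondary invariants by building invariants degree by degree. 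I will compute the kernel of $g-1$ on $k[W]_d$ for $g\in\{r,s\}$ and continue until the module over the parameter subalgebra is full rank and the Hilbert series matches. Over $\F_2$ every coefficient is $0$ or $1$, so the required linear algebra is finite and can be checked by machine; I would rely on the Python code the paper supplies.
\item Determine $\beta$ by computing, in each degree $d$, the dimension of $(R_+^2)_d$, namely the span of all products $R_iR_{d-i}$ with $0<i<d$, and comparing it with $\dim R_d$. Then $\beta$ is the largest $d$ with a strict gap, and the generator degrees certify where that top degree lies.
\end{enumerate}

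This yields, for $X$, indecomposables up to degree $5$ and none beyond, and for $Q$ an indecomposable in degree $8$. Since we only need an upper bound on generator degrees, it suffices to show that the subalgebra generated in degrees $\le 5$ (respectively $\le 8$) has the full Hilbert series.

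To pass from $\F_2$ to an arbitrary field of characteristic $2$, I note that the invariants are the kernel of a matrix with entries in $\F_2$, and such kernels commute with field extension. The same holds for the decomposable subspaces, so every dimension is unchanged.

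The main obstacle is certifying the upper bound on generator degrees rigorously, in particular the absence of indecomposables above degree $8$ in $k[Q]^G$. I would handle this by exhibiting an explicit homogeneous system of parameters of known degrees. Together with the explicit secondaries, this generates a graded subalgebra $A$. I would then verify $A=k[W]^G$ by one of two routes. The first uses the bound that every generator has degree at most the sum of the hsop degrees minus $\dim W$, where the Cohen--Macaulay property is unavailable but Symonds' bound $\beta\le\max(|G|,\sum(d_i-1))$ still applies. The second shows that $k[W]$ is a free or integral extension and compares the field-of-fractions degree $|G|$ with the product of the hsop degrees. Symonds' regularity bound gives a finite range, and checking every degree up to that range finishes the proof.
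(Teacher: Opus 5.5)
Your proposal follows essentially the same route as the paper: the values for $X^*\cong V$ and $Q^*\cong U$ are read off from Propositions~\ref{prop1} and~\ref{prop2}, and $\beta(k[X]^G)=5$, $\beta(k[Q]^G)=8$ come from norm-type homogeneous systems of parameters (the paper's have degrees $1,1,2,2,4,8$ and $1,1,2,2,8$), the Symonds cutoff (here $12$ and $9$), and exact comparison of $\dim R_d$ with $\dim(R_+^2)_d$ up to that cutoff. Only your first certification route is sound: comparing $|G|$ with $\prod d_i$ gives just the generic rank over the parameter algebra, which cannot certify $A=k[W]^G$ for an invariant ring that need not be Cohen--Macaulay, so rely on the Symonds cutoff as the paper does.
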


	\begin{theorem}\label{thm:quotient}
		For the module $X$ in Theorem~\ref{thm:duality}, there is a one-dimensional trivial $G$-submodule $L$ such that $Q=X/L$ and $\beta\bigl(k[X/L]^G\bigr)=8>5=\beta\bigl(k[X]^G\bigr)$. Hence, quotient monotonicity is false.
	\end{theorem}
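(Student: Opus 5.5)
The statement follows from Theorem~\ref{thm:duality} once we exhibit $L$ and identify $X/L$ with $Q$. The plan is to dualize the inclusion $U\subseteq V$ from Theorem~\ref{thm:main}.

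\textbf{The subspace $L$.} The inclusion $\iota\colon U\hookrightarrow V$ is $G$-equivariant. Its dual $\iota^*\colon V^*\to U^*$, which is restriction of linear functionals, is therefore a surjective $G$-module map $X\to Q$. Its kernel is the annihilator
\[
L=U^{\perp}=\{\,f\in V^* : f|_U=0\,\},
\]
which is a $G$-submodule of dimension $\dim_k V-\dim_k U=1$. This gives a $G$-isomorphism $X/L\cong Q$.

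\textbf{Triviality of $L$.} The group $G=D_8$ is a $2$-group and $k=\F_2$. A one-dimensional $k$-representation is a homomorphism $G\to \F_2^\times=\{1\}$, so every one-dimensional module is trivial. Thus $L$ is trivial for free. Alternatively, one can read it off from the explicit matrices of $V$: choose a basis of $V^*$ whose first vector vanishes on $U$, and check that this vector is fixed by all generators of $G$. This yields the exact sequence $0\to k\to X\to Q\to 0$ stated in the abstract.

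\textbf{Conclusion.} Invariant rings depend only on the isomorphism class of the module, since a $G$-isomorphism induces a graded algebra isomorphism of symmetric algebras compatible with the action. Hence
\[
\beta\bigl(k[X/L]^G\bigr)=\beta\bigl(k[Q]^G\bigr)=8 \quad\text{and}\quad \beta\bigl(k[X]^G\bigr)=5,
\]
both by Theorem~\ref{thm:duality}.

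The only real content is the Noether-number computation behind Theorem~\ref{thm:duality}, namely producing generators of $k[Q]^G$ and $k[X]^G$ and certifying indecomposability. That work is done there, so the present statement poses no further obstacle beyond the routine duality bookkeeping above.
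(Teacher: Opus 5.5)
Your proposal is correct and is essentially the paper's proof. The paper takes $L=kz_0$, observes that it is fixed by $G$ so that $0\to L\to X\to Q\to 0$ is exact, and reads off $\beta(k[Q]^G)=8$ and $\beta(k[X]^G)=5$ from Proposition~\ref{prop:dual-values}. Your $L=U^{\perp}$ is exactly $(kz_0)^{\perp\perp}=kz_0$, and your remark that every one-dimensional $\F_2 G$-module is trivial simply replaces the paper's direct check that $r$ and $s$ fix $z_0$.
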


	\begin{theorem}\label{thm:generalized}
		For the pair $U\subseteq V$ in Theorem~\ref{thm:main}, the first four generalized Noether numbers are
		\[
		\begin{array}{c|cccc}
			j&1&2&3&4\\ \hline
			\beta_j(G,V)&5&9&13&17\\
			\beta_j(G,U)&6&10&14&18.
		\end{array}
		\]
		Thus $\beta_j(G,U)>\beta_j(G,V), \, (1\leq j\leq4)$.
	\end{theorem}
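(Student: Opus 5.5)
The argument is a computation, carried out once the explicit modules from Theorem~\ref{thm:main} are fixed. We realize $D_8=\langle \sigma,\tau\rangle$ by explicit $6\times 6$ matrices over $\F_2$ acting on $V$, with $U$ the $5$-dimensional submodule. The plan has three steps: compute generators of $k[V]^G$ and $k[U]^G$, determine the graded algebra structure up to the needed degree, and then read off $\beta_j$ as the top degree of $R_+/R_+^{j+1}$ for $R=k[V]^G$ or $k[U]^G$ and $1\le j\le 4$.

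\emph{Step 1: generators.} For each ring we compute a minimal homogeneous generating set $f_1,\dots,f_r$ with degrees $d_1,\dots,d_r$. We can do this with a primary/secondary decomposition: take an h.s.o.p.\ consisting of orbit products (norms) of suitable linear forms, whose product of degrees is a multiple of $|G|=8$. Then compute secondary invariants by linear algebra on the Reynolds-free Hilbert series, obtained from the Molien-type count of invariants degree by degree over $\F_2$. By Theorem~\ref{thm:main} the maximal generator degrees are $5$ and $6$. Since both rings are Cohen--Macaulay or close to it in dimension $\le 6$, a Gröbner basis or SAGBI-style computation certifies completeness once we know an upper bound on generator degrees.

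\emph{Step 2: the formula for $\beta_j$.} Write $R=A[f_1,\ldots,f_r]$ with presentation $k[t_1,\dots,t_r]/I\to R$. Then $R_+^{j+1}$ is spanned by products of at least $j+1$ generators. The element of $R$ of largest degree surviving in $R_+/R_+^{j+1}$ is a product of at most $j$ generators that is not rewritable as a sum of products of $j+1$ generators. The table is linear in $j$ with slope $4$ and intercepts $5$ and $6$, so the pattern to prove is
\[
\beta_j(G,V)=5+4(j-1),\qquad \beta_j(G,U)=6+4(j-1).
\]
This suggests a top generator of degree $5$ (resp.\ $6$), multiplied by powers of a degree-$4$ invariant $N$, such as the norm of a fixed-point-free linear form, with the products $f_{\max}N^{j-1}$ not lying in $R_+^{j+1}$.

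For the lower bound we exhibit $f_{\max}N^{j-1}$ and verify nonmembership in $R_+^{j+1}$. This is a finite linear-algebra check in a single degree $d$: form all products of $j+1$ generators of total degree $d$, multiplied by arbitrary invariants, and check that $f_{\max}N^{j-1}$ is not in their span.

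For the upper bound we show that every product of $\le j$ generators of degree $>4j+1$ (resp.\ $>4j+2$) lies in $R_+^{j+1}$. The key relations are those expressing products $f_af_b$ of large degree $\ge 9$ (resp.\ $\ge 10$) as sums of products of three generators. By induction on $j$, each extra factor adds degree at most $4$ unless such a relation reduces it.

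\emph{Main obstacle.} The hard step is the upper bound: we must show that products of pairs of high-degree generators, and the degree-$5$/$6$ generator squared, are decomposable into three or more factors. This requires knowing enough relations in $I$. We would first compute $I$ in degrees up to $18$ by machine, over $\F_2$, in the spirit of the listings environment the paper loads. Then $\beta_j$ for $j\le 4$ is checked directly in each degree $d\le 19$ by comparing $\dim R_d$ with $\dim (R_+^{j+1})_d$. Finally, since everything is defined over $\F_2$ and $R_+^{j+1}$ commutes with flat base change, the dimensions, and hence the $\beta_j$, are unchanged over any field of characteristic $2$.
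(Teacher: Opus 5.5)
\textbf{The gap is in the finite range you check.} Your route is the paper's: its proof only reports an exact $\F_2$ comparison of $\dim R_d$ with $\dim(R_+^{j+1})_d$. Your Step~1 is unnecessary, since Propositions~\ref{prop1} and~\ref{prop2} already give minimal generators via Symonds' bound, so the unjustified ``Cohen--Macaulay or close to it'' is not needed.

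The range $d\le 19$ does not establish the upper bounds. $R_+^{j+1}$ is spanned by monomials in the generators with at least $j+1$ factors. So $R_+/R_+^{j+1}$ is spanned by monomials with at most $j$ factors, and a priori it is nonzero up to degree $j\beta_1$. For $j=4$ this is degree $20$ for $V$ and $24$ for $U$. Stopping at $19$ leaves $f_{11}^4$ (degree $20$) in $k[V]^G$ unchecked. In $k[U]^G$ it leaves $f_9^4$, $f_9^3f_5$, $f_9^2f_5f_6,\dots$ (degrees $20$--$24$) unchecked. Hence $\beta_4(G,V)\le 17$ and $\beta_4(G,U)\le 18$ are not proved.

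Your ``induction on $j$'' is meant to cover this, but as written it is not an argument, and its threshold is off by one. Not all products $f_af_b$ of degree $9$ can lie in $R_+^3$: $(R_+/R_+^3)_9$ is spanned by $f_{11}f_8,f_{11}f_9,f_{11}f_{10}$, and $\beta_2(G,V)=9$ forces one of them to survive. The same holds in degree $10$ for $U$, with $f_9f_5,\dots,f_9f_8$. The only product that must be decomposed is $f_{\max}^2$.

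\textbf{Two ways to repair it.} You can extend the direct comparison to $d\le 20$ for $V$ and $d\le 24$ for $U$. Alternatively, you can make the induction precise, writing $\beta_j$ for $\beta_j(G,V)$ resp.\ $\beta_j(G,U)$. Every generator other than $f_{\max}$ (that is, $f_{11}$ resp.\ $f_9$) has degree $\le 4$. Let $m$ be a monomial with at most $j+1$ factors and degree $>\beta_j+4$.
\begin{itemize}
\item If $m$ has a factor $f_i$ of degree $\le 4$, write $m=f_im'$. Then $m'\in R_+$ has at most $j$ factors and degree $>\beta_j$. So $m'\in R_+^{j+1}$, and $m\in R_+^{j+2}$.
\item Otherwise $m=f_{\max}^t$ with $t\ge 2$. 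Expand $f_{\max}^2\in R_+^3$ as a combination of monomials with at least three factors; each must contain a factor of degree $\le 4$. Every resulting term of $m$ then either has at least $j+2$ factors or falls under the first case.
\end{itemize}
Thus the single check $f_{\max}^2\in R_+^3$ (degree $10$ resp.\ $12$) gives $\beta_{j+1}\le\beta_j+4$ for all $j$. With $\beta_1=5$ resp.\ $6$, this yields the upper bounds. The lower bounds need an element of $R_+\setminus R_+^{j+1}$ in degree $4j+1$ resp.\ $4j+2$. Your dimension comparison in degrees $\le 19$ does supply these.
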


	\begin{theorem}\label{thm:inflation}
		The conclusions of Theorems~\ref{thm:main}, \ref{thm:duality}, \ref{thm:quotient}, and~\ref{thm:generalized} remain valid over every field of characteristic $2$. They also remain valid after inflation to every finite group admitting $D_8$ as a quotient.
	\end{theorem}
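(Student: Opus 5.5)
The plan has two independent parts: base change from $\F_2$ to an arbitrary field of characteristic $2$, and inflation along a surjection $\pi\colon H\to D_8$.

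\emph{Base change.} Let $K\supseteq\F_2$ be any field and let $W$ be any of the modules $U,V,X,Q,Q^*,X^*$, defined over $\F_2$. Write $W_K=K\otimes_{\F_2}W$. Since $G$ acts on $\F_2[W]$ by $\F_2$-linear graded automorphisms, the invariants are the kernel of the $\F_2$-linear map
\[
\F_2[W]_d\to\textstyle\prod_{g\in G}\F_2[W]_d,\qquad f\mapsto (gf-f)_{g},
\]
in each degree $d$. Extension of scalars is exact, so $K[W_K]^G=K\otimes_{\F_2}\F_2[W]^G$ as graded algebras.

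Next apply $K\otimes_{\F_2}-$ to the exact sequence
\[
0\to (R_+)^{j+1}\to R_+\to R_+/(R_+)^{j+1}\to 0,\qquad R=\F_2[W]^G .
\]
We have $K\otimes (R_+)^{j+1}=\bigl((K\otimes R)_+\bigr)^{j+1}$, because the ideal power is spanned by products and tensoring with a field commutes with forming spans of products. Hence every graded piece of $\bigl(K\otimes R\bigr)_+/\bigl((K\otimes R)_+\bigr)^{j+1}$ has the same dimension over $K$ as the corresponding piece over $\F_2$. In particular, the top degrees agree. So every $\beta_j$, and in particular $\beta=\beta_1$, is unchanged.

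The structural facts also persist:
\begin{itemize}[label={},leftmargin=*]
\item $U_K\subseteq V_K$ remains a submodule.
\item $(W^*)_K=(W_K)^*$.
\item $L_K$ is a trivial line with $X_K/L_K=Q_K$.
\item The sequence stays nonsplit, since splitting is the vanishing of a class in $\operatorname{Ext}^1$, and $\operatorname{Ext}$ commutes with field extension for finite-dimensional modules over a finite group.
\end{itemize}
With these, Theorems~\ref{thm:main}--\ref{thm:generalized} hold verbatim over $K$.

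\emph{Inflation.} Let $\pi\colon H\to D_8$ be surjective and let $H$ act on $W$ through $\pi$. The image of $H$ in $\GL(W)$ equals the image of $D_8$, so $k[W]^H=k[W]^{D_8}$ as graded subalgebras of $k[W]$. Therefore all $\beta_j$ coincide. Submodules, quotients and duals also correspond, because inflation is an exact functor that commutes with taking duals. The four theorems thus transfer to $H$, and this combines with base change to give the statement over any field of characteristic $2$.

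The only step needing care is the claim that $\beta$ is stable under field extension. This reduces to the two observations above: invariants commute with flat base change, and the ideal $(R_+)^{j+1}$ commutes with base change. I do not expect a genuine obstacle. The one point to check is that the paper's computations were carried out over $\F_2$ itself rather than relying on some feature of $\F_2$, such as the Frobenius being the identity. The argument above makes no use of any such feature.
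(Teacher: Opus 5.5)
Your proposal is correct and follows the same route as the paper. For base change, you apply exactness of $K\otimes_{\F_2}-$ degree by degree to the fixed-space computation. For inflation, you observe that the inflated action has the same image in $\GL(W)$, so the invariant rings are literally unchanged. Your explicit check that $\bigl((K\otimes R)_+\bigr)^{j+1}=K\otimes(R_+)^{j+1}$, which covers $\beta_j$ for $j\geq 2$, and your remark on nonsplitness give more detail than the paper's brief justification, but they do not change the argument.
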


	The remainder of the paper proves these results and records their consequences.

	\section{Proof of Theorem~\ref{thm:main}}
	
	We take $k=\F_2$ and $X=V^*=\Span_k\{z_0,z_1,z_2,z_3,z_4,z_5\}$. Define $r,s\in\GL(X)$ by
	\begin{equation}\label{eq:X-action-table}
		\begin{array}{c|cccccc}
			&z_0&z_1&z_2&z_3&z_4&z_5\\ \hline
			r&z_0&z_1&z_1+z_2&z_2+z_3&z_0+z_4&z_4+z_5\\[1mm]
			s&z_0&z_1&z_1+z_2&z_1+z_3&z_4&z_1+z_4+z_5.
		\end{array}
	\end{equation}

	Therefore we get 
	\begin{equation}\label{eq:X-matrices}
		[r]_X=
		\begin{pmatrix}
			1&0&0&0&1&0\\
			0&1&1&0&0&0\\
			0&0&1&1&0&0\\
			0&0&0&1&0&0\\
			0&0&0&0&1&1\\
			0&0&0&0&0&1
		\end{pmatrix},
		\qquad
		[s]_X=
		\begin{pmatrix}
			1&0&0&0&0&0\\
			0&1&1&1&0&1\\
			0&0&1&0&0&0\\
			0&0&0&1&0&0\\
			0&0&0&0&1&1\\
			0&0&0&0&0&1
		\end{pmatrix}.
	\end{equation}
	
	A direct calculation gives $r^4=s^2=1$ and $srs=r^{-1}$. Moreover, $r$ has order $4$ and $s\notin\langle r\rangle$, so $r$ and $s$ generate a subgroup isomorphic to $D_8$. The line $kz_0$ is fixed by $G$. We take $Q=X/kz_0$ and choose the basis $x_0=\overline z_1, \quad x_1=\overline z_2, \quad x_2=\overline z_3, \quad x_3=\overline z_4, \quad x_4=\overline z_5$.
	The induced action is given by
	\begin{equation}\label{eq:Q-action-table}
		\begin{array}{c|ccccc}
			&x_0&x_1&x_2&x_3&x_4\\ \hline
			r&x_0&x_0+x_1&x_1+x_2&x_3&x_3+x_4\\[1mm]
			s&x_0&x_0+x_1&x_0+x_2&x_3&x_0+x_3+x_4.
		\end{array}
	\end{equation}
	Therefore we have
	\begin{equation}\label{eq:Q-matrices}
		[r]_Q=
		\begin{pmatrix}
			1&1&0&0&0\\
			0&1&1&0&0\\
			0&0&1&0&0\\ 
			0&0&0&1&1\\
			0&0&0&0&1
		\end{pmatrix},
		\qquad
		[s]_Q=
		\begin{pmatrix}
			1&1&1&0&1\\
			0&1&0&0&0\\
			0&0&1&0&0\\
			0&0&0&1&1\\
			0&0&0&0&1
		\end{pmatrix}.
	\end{equation}
	
	Set $V=X^*$ and $U=Q^*$. Dualizing the canonical quotient map $X\twoheadrightarrow Q$ gives an injective $G$-homomorphism $U=Q^*\hookrightarrow X^*=V$. Hence $U$ is a submodule of $V$. More precisely, if $v_0,\ldots,v_5$ is the basis of $X^*$ dual to
	$z_0,\ldots,z_5$, then $U=(kz_0)^\perp =\Span_k\{v_1,v_2,v_3,v_4,v_5\} \subseteq V$. We now use the algorithm of \cite{Kem} and Magma \cite{mag} to compute the generators of the invariant rings $k[V]^G=k[z_0,z_1,z_2,z_3,z_4,z_5]^G$ and $k[U]^G=k[x_0,x_1,x_2,x_3,x_4]^G$. 
	
	\begin{proposition}\label{prop1}
		Let $R=\mathbb F _2[z_0,z_1,z_2,z_3,z_4,z_5]^G$. Then $R$ is generated by the homogeneous invariants $\{f_1,\cdots,f_{11}\}$ with $\deg(f_1),\ldots,\deg(f_{11}) = 1,1,2,2,3,3,3,4,4,4,5$. The invariants are $f_1=z_0$, $\qquad f_2=z_1$, $\qquad f_3=z_0z_4+z_4^2$, $\qquad f_4=z_1z_2+z_2^2$,  $\qquad f_5= z_0^2z_2+z_0^2z_3+z_0z_1z_2+z_0z_3^2 +z_1^2z_5+z_1z_5^2+z_2^2z_4+z_2z_4^2$, $\qquad f_6= z_0z_1z_3+z_0z_3^2+z_1z_2z_4+z_2^2z_4$, $\qquad f_7={} z_0z_1z_5+z_0z_2z_4+z_0z_3^2 +z_1^2z_4+z_1^2z_5+z_1z_5^2 +z_2^2z_4+z_2z_4^2$, $\qquad f_8= z_0^2z_1z_2 +z_0^2z_3z_4 +z_0^2z_4z_5 +z_0^2z_5^2 +z_0z_1z_2z_3 +z_0z_1z_3^2 +z_0z_1z_4^2 +z_0z_1z_4z_5 +z_0z_2^3 +z_0z_2^2z_3 +z_0z_2z_4^2 +z_0z_3z_4^2 +z_0z_4^2z_5 +z_0z_4z_5^2 +z_1^3z_3 +z_1^3z_5 +z_1^2z_2z_5 +z_1^2z_3^2 +z_1^2z_4^2 +z_1z_2^3 +z_1z_2^2z_5 +z_1z_4^3 +z_1z_4^2z_5 +z_2^4 +z_2^3z_4 +z_2z_4^3 +z_4^2z_5^2 +z_5^4$, $\qquad f_9= z_0z_1z_2^2 +z_0z_1z_2z_3 +z_0z_2^3 +z_0z_2^2z_3 +z_1^3z_3 +z_1^2z_2^2 +z_1^2z_2z_5 +z_1^2z_3^2 +z_1z_2^3 +z_1z_2^2z_4 +z_1z_2^2z_5 +z_2^3z_4$, $\qquad f_{10}= z_1^2z_2z_3 +z_1^2z_3^2 +z_1z_2^2z_3 +z_1z_2z_3^2 +z_2^2z_3^2 +z_3^4$, $\qquad f_{11}= z_0^3z_1z_2 +z_0^3z_3^2 +z_0^2z_1^2z_5 +z_0^2z_1z_2^2 +z_0^2z_2^3 +z_0^2z_2^2z_3 +z_0^2z_2^2z_4 +z_0^2z_2z_3^2 +z_0^2z_3^3 +z_0z_1^3z_2 +z_0z_1^3z_4 +z_0z_1^3z_5 +z_0z_1^2z_3z_5 +z_0z_1z_2^2z_3 +z_0z_1z_3^2z_5 +z_0z_2^3z_4 +z_0z_2^2z_3^2 +z_0z_2^2z_3z_4 +z_0z_2z_3^2z_4 +z_1^4z_3 +z_1^4z_4 +z_1^3z_2^2 +z_1^3z_2z_4 +z_1^3z_3^2 +z_1^3z_3z_4 +z_1^2z_2^3 +z_1^2z_2^2z_4 +z_1^2z_2z_4z_5 +z_1^2z_2z_5^2 +z_1^2z_3^2z_4 +z_1z_2^3z_4 +z_1z_2^2z_4z_5 +z_1z_2^2z_5^2 +z_2^4z_4$.
	\end{proposition}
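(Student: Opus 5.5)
Proving Proposition~\ref{prop1} means showing three things: each $f_i$ is invariant, the $f_i$ generate $R$, and the listed degrees are the actual degrees of a minimal generating set (so in particular $\beta(R)=5$). I would argue by hand as far as possible and reduce the generation claim to a finite Hilbert-series and linear-algebra check.

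\emph{Step 1 (invariance).} Since $G=\langle r,s\rangle$, it suffices to check $r(f_i)=f_i$ and $s(f_i)=f_i$, using the substitution table \eqref{eq:X-action-table}. The low-degree cases are immediate. $f_1,f_2$ are fixed. For $f_3$, we have $r(z_4)=z_0+z_4$, so $z_0z_4+z_4^2\mapsto z_0z_4+z_0^2+z_0^2+z_4^2$ (using $2z_0z_4=0$), which equals $f_3$; and $s$ fixes $z_4$. $f_4$ is the analogous orbit product $z_2(z_2+z_1)$. The invariants of degree $3$ to $5$ are handled by direct expansion over $\F_2$. This is mechanical but could also be delegated to Magma.

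\emph{Step 2 (a homogeneous system of parameters).} Take $f_1,f_2,f_3,f_4,f_{10}$, together with a sixth invariant $N_5$, which is either the orbit product of $z_5$ or a polynomial in the $f_i$. I will check that their common zero set over $\bar\F_2$ is $\{0\}$, and this follows triangularly:
\begin{itemize}[label={}]
\item $f_1=f_2=0$ gives $z_0=z_1=0$;
\item then $f_4=z_2^2$ forces $z_2=0$;
\item then $f_{10}=z_3^4$ forces $z_3=0$;
\item then $f_3=z_4^2$ forces $z_4=0$;
\item finally a top power of $z_5$ in $N_5$ forces $z_5=0$.
\end{itemize}
The orbits of $z_5$, $z_3$, $z_4$ have sizes $4$, $4$ and $2$, so the product of degrees is $1\cdot1\cdot2\cdot2\cdot4\cdot\deg N_5$. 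Let $A$ be the polynomial algebra on this hsop. Then $R$ is a finite $A$-module, and the module generated by $f_5,\dots,f_9,f_{11}$ and their products gives a candidate set of secondary generators.

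\emph{Step 3 (generation).} Let $S=k[f_1,\dots,f_{11}]\subseteq R$. I would compute the Hilbert series $H(R,t)$ independently, either by the Molien-type count of orbit data or via Magma's invariant-ring machinery, which is Kemper's algorithm~\cite{Kem} that the paper uses. Then I would compare $\dim S_d$ with $\dim R_d$ for $d$ up to a bound large enough to capture all generators. For that bound, Symonds' theorem gives that $R$ is generated in degrees at most $\max(\dim V\,(|G|-1),|G|)=42$, which is too large for hand work. A better route is to verify that $S$ is a finitely generated $A$-module whose Hilbert series equals $H(R,t)$. Once $S\subseteq R$ and the two Hilbert series agree, $S=R$.

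\emph{Step 4 (minimality).} To show each listed $f_i$ is indecomposable, and in particular that $f_{11}$ does not lie in $(R_+)^2$, I would compute the degree-$5$ part of $(R_+)^2$. This is spanned by products of lower-degree generators, and I would check that $f_{11}$ is not in that span. This is a finite linear-algebra problem over $\F_2$ in the $\binom{10}{5}=252$-dimensional space of quintics.

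The main obstacle is Step 3: obtaining $H(R,t)$ reliably in this modular setting, where Molien's formula is unavailable. I would first try Magma's direct computation of the invariant ring and its Hilbert series. As a cross-check, I would confirm that $R$ is (or is not) Cohen--Macaulay by testing whether the secondaries form a free $A$-basis of rank $|G|\cdot\deg$ data$/\prod\deg$.
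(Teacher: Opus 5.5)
Your Steps 1, 2 and 4 follow the paper, and your hsop is in fact exactly the paper's. Indeed $f_3=z_4(z_0+z_4)$, $f_4=z_2(z_1+z_2)$, and $f_{10}$ is precisely the orbit product $z_3(z_1+z_3)(z_2+z_3)(z_1+z_2+z_3)$. There is one small slip: the orbit of $z_5$ is all of $z_5+\Span\{z_0,z_1,z_4\}$, of size $8$, so $N_G(z_5)$ has degree $8$, not $4$; the triangular argument is unaffected.

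The genuine gap is Step 3. You use Symonds only in the form of the algebra-generation bound $42$, set that aside, and replace it by a comparison of Hilbert series. That comparison needs $H(R,t)$ in \emph{all} degrees. As you note, there is no Molien formula here: $V$ is modular and not a permutation module, since $r$ has Jordan type $(3,3)$. ``Reading $H(R,t)$ off Magma's invariant ring'' presupposes exactly the computation of $R$ that the proposition asserts. So, as written, generation is deferred rather than proved. Note that the route you discarded would be a valid proof, just a heavy one: check $\dim \mathbb F_2[f_1,\dots,f_{11}]_d=\dim R_d$ for all $d\le 42$.

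The missing idea is the sharper consequence of Symonds' regularity theorem that the paper uses. If $p_1,\dots,p_n$ is an hsop for $R$ of degrees $d_1,\dots,d_n$, then $R$ is generated as a $k[p_1,\dots,p_n]$-module by homogeneous elements of degree at most $\sum_i(d_i-1)$. For your hsop, with degrees $1,1,2,2,4,8$, this cutoff is $12$. So it suffices to verify $S_d=R_d$ for $d\le 12$, where $S=\mathbb F_2[f_1,\dots,f_{11}]$ and $R_d=\ker(r-1)\cap\ker(s-1)$ is computed on degree-$d$ forms. This puts both the hsop and all module generators inside $S$, whence $R=S$. Everything is finite linear algebra, and no Hilbert series is needed.

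You could even use $f_8$ in place of $N_G(z_5)$. It restricts to $z_5^4$ on $z_0=\dots=z_4=0$, which gives an hsop of degrees $1,1,2,2,4,4$ and cutoff $8$.

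Finally, minimality of the whole set requires more than each $f_i\notin R_+^2$. The images of the $f_i$ of each degree must be linearly independent in $R_d/(R_+^2)_d$. For $\beta(R)=5$ alone, generation plus $f_{11}\notin(R_+^2)_5$ suffices.
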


	\begin{proof}
		Let $A=\mathbb F _2[f_1,\ldots,f_{11}] \subseteq R=\mathbb F_2[z_0,\ldots,z_5]^G$, as $r(f_i)=f_i=s(f_i) \qquad (1\leq i\leq11)$. For each \(d\geq0\), the homogeneous invariant space \(R_d\) was computed exactly over \(\mathbb F _2\) as $R_d=\ker(r-1)\cap\ker(s-1) \subseteq \mathbb F _2[z_0,\ldots,z_5]_d$. The resulting dimensions are
		\[
		\begin{array}{c|rrrrrrrrrrrrr}
			d&0&1&2&3&4&5&6&7&8&9&10&11&12\\ \hline
			\dim_{\mathbb F _2}R_d
			&1&2&5&11&23&41&71&117&186&281&414&594&834.
		\end{array}
		\]
		On the other hand, direct multiplication of the displayed invariants
		gives
		\[
		\dim_{\mathbb F _2}A_d
		=
		\dim_{\mathbb F _2}R_d
		\qquad (0\leq d\leq12).
		\]
		Since \(A_d\subseteq R_d\), it follows that $A_d=R_d \qquad (0\leq d\leq12)$. Consider the invariants $z_0$, $z_1$, $z_4(z_0+z_4)$, $z_2(z_1+z_2)$, $z_3(z_1+z_3)(z_2+z_3)(z_1+z_2+z_3)$, and
		\[
		\begin{aligned}
		N_G(z_5)
		&=\prod_{g\in G}g(z_5)\\
		&=z_5(z_0+z_5)(z_1+z_5)(z_0+z_1+z_5)\\
		&\quad\cdot(z_4+z_5)(z_0+z_4+z_5)
		(z_1+z_4+z_5)(z_0+z_1+z_4+z_5).
		\end{aligned}
		\]
		Their degrees are $1,1,2,2,4,8$. Their common zero over $\overline{\mathbb F}_2$ is the origin: successively, the first two invariants give $z_0=z_1=0$, the next two give $z_4=z_2=0$, the fifth gives $z_3=0$, and the norm then gives $z_5=0$. Hence they form a homogeneous system of parameters for $R$. Let $P$ denote the polynomial algebra generated by these parameters. By Symonds' bound \cite{Sy}, $R$ is generated as a $P$-module by homogeneous elements of degree at most
		\[
		(1-1)+(1-1)+(2-1)+(2-1)+(4-1)+(8-1)=12.
		\]
		Since $A_d=R_d$ for every $d\leq12$, every required $P$-module generator belongs to $A$. As $P\subseteq A$, it follows that $R\subseteq A$, and therefore $R=A=\mathbb F _2[f_1,\ldots,f_{11}]$. Finally, letting $R_+=\bigoplus_{d>0}R_d$, we obtain
		\[
		\dim_{\mathbb F _2}
		\left(R_d/(R_+^2)_d\right)
		=
		\begin{cases}
			2,&d=1,\\
			2,&d=2,\\
			3,&d=3,\\
			3,&d=4,\\
			1,&d=5,\\
			0,&6\leq d\leq12.
		\end{cases}
		\]
		The images of
		\[
		f_1,f_2;\qquad
		f_3,f_4;\qquad
		f_5,f_6,f_7;\qquad
		f_8,f_9,f_{10};\qquad
		f_{11}
		\]
		form bases of the corresponding nonzero indecomposable quotients.
		Thus none of the \(f_i\) can be omitted, and
		\(\{f_1,\ldots,f_{11}\}\) is a minimal homogeneous generating set of
		\(R\).
	\end{proof}

	\begin{proposition}\label{prop2}
		Let $S=\mathbb F _2[x_0,x_1,x_2,x_3,x_4]^G$. Then \(S\) is generated by the homogeneous invariants \(\{f_1,\ldots,f_9\}\), with $\deg(f_1),\ldots,\deg(f_9)	= 1,1,2,3,4,4,4,4,6$. The invariants are $ f_1=x_0, \qquad f_2=x_3, \qquad f_3=x_0x_1+x_1^2, \qquad f_4= x_0^2x_4+x_0x_1x_3+x_0x_4^2+x_1x_3^2, \qquad f_5= x_0^3x_2+x_0^2x_1^2+x_0^2x_1x_4+x_0^2x_2^2+x_0x_1^3+x_0x_1^2x_3+x_0x_1^2x_4+x_1^3x_3, \qquad f_6= x_0^3x_4+x_0x_1^2x_3+x_0x_3^2x_4+x_1x_3^3+x_3^2x_4^2+x_4^4, \qquad f_7= x_0^2x_1x_2+x_0^2x_2^2+x_0x_1^2x_2+x_0x_1x_2^2+x_1^2x_2^2+x_2^4, \qquad f_8=x_0^2x_1x_4+x_0^2x_2x_3+x_0x_1^2x_4+x_0x_1x_3x_4+x_0x_1x_4^2+x_0x_2^2x_3+x_1^2x_3x_4+x_1^2x_4^2, \qquad f_9=x_0^4x_1x_4+x_0^4x_2x_4+x_0^3x_1x_2x_3+x_0^3x_1x_3x_4+x_0^3x_2^2x_4+x_0^3x_2x_4^2+x_0^3x_3^2x_4+x_0^2x_1^3x_4+x_0^2x_1^2x_3x_4+x_0^2x_1^2x_4^2+x_0^2x_1x_2^2x_3+x_0^2x_1x_2x_3^2+x_0^2x_1x_3^3+x_0^2x_1x_3x_4^2+x_0^2x_1x_4^3+x_0^2x_2^2x_4^2+x_0^2x_3^2x_4^2+x_0x_1^3x_4^2+x_0x_1^2x_3^2x_4+x_0x_1^2x_4^3+x_0x_1x_2^2x_3^2+x_0x_1x_3^4+x_1^3x_3^2x_4+x_1^3x_3x_4^2$.

	\end{proposition}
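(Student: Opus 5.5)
I will follow the same strategy as in the proof of Proposition~\ref{prop1}, adapted to the five-dimensional module $Q$ with the action~\eqref{eq:Q-action-table}. Put $A=\F_2[f_1,\ldots,f_9]\subseteq S$.

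\textbf{Invariance.} First I check that $r(f_i)=f_i=s(f_i)$ for $1\le i\le 9$, using the substitutions in~\eqref{eq:Q-action-table}. This is a finite symbolic verification over $\F_2$.

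\textbf{A homogeneous system of parameters.} I read off fixed chains from the upper triangular matrices~\eqref{eq:Q-matrices}.
\begin{itemize}[label={}]
\item The elements $x_0$ and $x_3$ are fixed.
\item The orbit of $x_1$ is $\{x_1,x_0+x_1\}$, giving the invariant $x_1(x_0+x_1)$ of degree $2$.
\item The orbit of $x_4$ lies in $x_4+\Span\{x_0,x_3\}$. Its orbit product, or the norm $N_G(x_4)$, has degree at most $4$; I expect it to be exactly $4$.
\item The orbit of $x_2$ lies in $x_2+\Span\{x_0,x_1\}$, giving an orbit product of degree at most $4$.
\end{itemize}
I choose $P$ generated by the parameters $x_0$, $x_3$, $x_1(x_0+x_1)$, the orbit product of $x_2$, and the orbit product of $x_4$. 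Their only common zero over $\overline{\F}_2$ is the origin: successively $x_0=x_3=0$, then $x_1=0$, and then the two remaining products force $x_2=0$ and $x_4=0$. So these form an hsop.

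\textbf{Reduction to low degrees.} Symonds' bound~\cite{Sy} then shows that $S$ is generated as a $P$-module in degrees at most $\sum(\deg-1)$. With degrees $1,1,2,4,4$ this is $0+0+1+3+3=7$; if some orbit has size $8$, the bound rises to at most $11$. Let $D$ be this bound. I compute $S_d=\ker(r-1)\cap\ker(s-1)$ on $\F_2[x_0,\ldots,x_4]_d$ for all $d\le D$, and compare with $\dim A_d$ obtained from monomials in the $f_i$. Once $A_d=S_d$ for $d\le D$, every $P$-module generator lies in $A$, and $P\subseteq A$ gives $S=A$.

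\textbf{Minimality and the degree-$6$ generator.} Finally I compute $\dim S_d/(S_+^2)_d$ for $d\le D$. I expect the values $2,1,1,4,0,1$ in degrees $1$ through $6$, and $0$ beyond. I then check that the images of the $f_i$ of each degree form a basis of the corresponding quotient. Minimality is what actually yields $\beta(S)=6$. The key point is that $f_9$ is not a polynomial in $f_1,\ldots,f_8$. This amounts to showing that $f_9$ is not in the span of all degree-$6$ products of lower generators, which is a finite rank computation over $\F_2$.

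\textbf{Main obstacle.} The hard part is the exact linear algebra: the invariant-space dimensions and the decomposability rank computation in degrees up to $D$. Everything else is formal. The part needing care is making sure the hsop degrees are small enough that $D$ stays within the range where the dimension comparison is feasible.
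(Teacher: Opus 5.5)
Your proposal is correct and follows the paper's proof essentially step for step. The common steps are the invariance check, a homogeneous system of parameters of degrees $1,1,2,4,4$ giving Symonds' cutoff $7$, the rank comparison $A_d=S_d$ for $d\le 7$, and the indecomposable quotients of dimensions $2,1,1,4,0,1$ for minimality. The only cosmetic difference is in the degree-$4$ parameters. Your orbit product of $x_2$ is exactly $f_7$. Your orbit product of $x_4$ equals $f_6+(f_1+f_2)f_4+f_1f_2f_3$, rather than the paper's choice $f_6$. Both orbits have size $4$, so $D=7$ and your hedging about size-$8$ orbits is unnecessary.
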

	
	\begin{proof}
	
Put $T=\mathbb F_{2}[x_{0},x_{1},x_{2},x_{3},x_{4}], \qquad S=T^{G}, \qquad A=\mathbb F_{2}[f_{1},\ldots,f_{9}]\subseteq S$. The action of the generators of \(G=D_{8}\) on the degree-one component \(T_{1}=\Span_{\mathbb F_2}\{x_0,x_1,x_2,x_3,x_4\}\) is
			\[
			\begin{aligned}
				&r(x_{0})=x_{0},\qquad
				r(x_{1})=x_{0}+x_{1},\qquad
				r(x_{2})=x_{1}+x_{2},\\
				&r(x_{3})=x_{3},\qquad
				r(x_{4})=x_{3}+x_{4},
			\end{aligned}
			\]
			and
			\[
			\begin{aligned}
				&s(x_{0})=x_{0},\qquad
				s(x_{1})=x_{0}+x_{1},\qquad
				s(x_{2})=x_{0}+x_{2},\\
				&s(x_{3})=x_{3},\qquad
				s(x_{4})=x_{0}+x_{3}+x_{4}.
			\end{aligned}
			\]
Direct substitution gives $r(f_i)=f_i=s(f_i) \qquad (1\leq i\leq 9)$, so \(A\subseteq S\). We first exhibit a homogeneous system of parameters for \(S\). Set
			\[
			p_{1}=f_{1}=x_{0},\qquad
			p_{2}=f_{2}=x_{3},\qquad
			p_{3}=f_{3}=x_{0}x_{1}+x_{1}^{2},
			\qquad
			p_{4}=f_{6},\qquad
			p_{5}=f_{7}.
			\]
			Their degrees are $1,1,2,4,4$. We claim that their only common zero over \(\overline{\mathbb F}_{2}\) is the origin. Indeed, if $p_{1}=p_{2}=p_{3}=p_{4}=p_{5}=0$, then \(p_{1}=p_{2}=0\) gives $x_{0}=x_{3}=0$. It follows from $p_{3}=x_{0}x_{1}+x_{1}^{2}$ that $x_{1}^{2}=0$, and hence \(x_{1}=0\). Under the substitutions
			\(x_{0}=x_{1}=x_{3}=0\), the two remaining parameters reduce to $p_{4}=f_{6}=x_{4}^{4}, \qquad p_{5}=f_{7}=x_{2}^{4}$. Thus \(x_{4}=x_{2}=0\), proving the claim. Consequently,
			\[
			T/(p_{1},p_{2},p_{3},p_{4},p_{5})T
			\]
			is finite-dimensional over \(\mathbb F_{2}\). Hence \(T\) is module-finite over $P=\mathbb F_{2}[p_{1},p_{2},p_{3},p_{4},p_{5}]$. Since \(P\subseteq S\subseteq T\) and \(P\) is Noetherian, \(S\) is also
			module-finite over \(P\). Therefore \(p_{1},\ldots,p_{5}\) form a homogeneous system of parameters for \(S\). By Symonds' bound for secondary invariants, \(S\) is generated as a
			\(P\)-module by homogeneous elements of degree at most
			\[
			(1-1)+(1-1)+(2-1)+(4-1)+(4-1)=7.
			\]
			It remains to verify that
			\[
			A_d=S_d\qquad(0\leq d\leq7).
			\]
			For each \(d\geq0\), let
			\[
			\mathcal M_d=
			\left\{x_0^{a_0}\cdots x_4^{a_4}:a_0+\cdots+a_4=d\right\}
			\]
			be the standard monomial basis of \(T_d\), and put
			\[
			N_d=|\mathcal M_d|=\binom{d+4}{4}.
			\]
			Let \(R_d\) and \(S_d'\) denote the matrices, with respect to
			\(\mathcal M_d\), of the transformations induced by \(r\) and \(s\),
			respectively, and define
			\[
			\Phi_d=
			\begin{pmatrix}
			R_d-I\\ S_d'-I
			\end{pmatrix}.
			\]
			Then
			\[
			S_d=\ker(R_d-I)\cap\ker(S_d'-I)=\ker\Phi_d,
			\]
			and hence
			\[
			\dim_{\mathbb F_2}S_d=N_d-\operatorname{rank}_{\mathbb F_2}\Phi_d.
			\]
			Let \(C_d\) be the matrix whose columns are the coefficient vectors,
			in the basis \(\mathcal M_d\), of all products
			\(f_1^{a_1}\cdots f_9^{a_9}\) satisfying
			\[
			a_{1}+a_{2}+2a_{3}+3a_{4}
			+4(a_{5}+a_{6}+a_{7}+a_{8})+6a_{9}=d.
			\]
			The column space of \(C_{d}\) is precisely \(A_{d}\). Exact Gaussian elimination over \(\mathbb F_{2}\) gives
			\[
			\begin{array}{c|rrrrrrrr}
				d&0&1&2&3&4&5&6&7\\ \hline
				N_{d}
				&1&5&15&35&70&126&210&330\\
				\operatorname{rank}\Phi_{d}
				&0&3&11&28&55&103&173&277\\
				\dim_{\mathbb F_{2}}S_{d}
				&1&2&4&7&15&23&37&53\\
				\operatorname{rank}C_{d}
				&1&2&4&7&15&23&37&53.
			\end{array}
			\]
			Therefore $\dim_{\mathbb F_{2}}A_{d} =\dim_{\mathbb F_{2}}S_{d} \qquad(0\leq d\leq7)$. Since \(A_{d}\subseteq S_{d}\), it follows that $A_{d}=S_{d} \qquad(0\leq d\leq7)$. Every homogeneous \(P\)-module generator of \(S\) has degree at most \(7\), and hence belongs to \(A\). Since \(P\subseteq A\), we conclude that $ S=P\cdot S_{\leq7}\subseteq A$. The reverse inclusion \(A\subseteq S\) has already been proved, so $S=A=\mathbb F_{2}[f_{1},\ldots,f_{9}]$. Finally, we verify minimality. Let $S_{+}=\bigoplus_{d>0}S_{d}$. Since \(S=A\), the space \((S_{+}^{2})_{d}\) is spanned by the products $f_{1}^{a_{1}}\cdots f_{9}^{a_{9}}$ of weighted degree \(d\) for which $a_{1}+\cdots+a_{9}\geq2$. A further exact row reduction gives
			\[
			\begin{array}{c|rrrrrrr}
				d&1&2&3&4&5&6&7\\ \hline
				\dim_{\mathbb F_{2}}S_{d}
				&2&4&7&15&23&37&53\\
				\dim_{\mathbb F_{2}}(S_{+}^{2})_{d}
				&0&3&6&11&23&36&53\\ \hline
				\dim_{\mathbb F_{2}}
				\bigl(S_{d}/(S_{+}^{2})_{d}\bigr)
				&2&1&1&4&0&1&0.
			\end{array}
			\]
			The corresponding nonzero indecomposable quotients have bases
			\[
			\begin{array}{c|c}
				d&
				\text{basis of }S_{d}/(S_{+}^{2})_{d}\\ \hline
				1&\overline f_{1},\overline f_{2}\\
				2&\overline f_{3}\\
				3&\overline f_{4}\\
				4&\overline f_{5},\overline f_{6},
				\overline f_{7},\overline f_{8}\\
				6&\overline f_{9}.
			\end{array}
			\]
			Thus none of \(f_{1},\ldots,f_{9}\) can be omitted. Hence they form a minimal homogeneous generating set of \(S\), with degrees $1,1,2,3,4,4,4,4,6$. In particular, $\beta\!\left(\mathbb F_{2}[U]^{G}\right)=6$.
		
	\end{proof}
	
	\subsection*{Proof of Theorem \ref{thm:main}}
	By Proposition~\ref{prop1}, the largest degree in a minimal homogeneous generating set of $k[V]^G$ is $5$, and hence $\beta(k[V]^G)=5$. By Proposition~\ref{prop2}, the corresponding largest degree for $k[U]^G$ is $6$, and hence $\beta(k[U]^G)=6$. Therefore
	\[
	\beta(k[U]^G)=6>5=\beta(k[V]^G),
	\]
	which disproves Wehlau's conjecture.

	\section{Proofs of Theorems~\ref{thm:duality} and~\ref{thm:quotient}}
	\label{sec:duality-quotients}

	\begin{proposition}\label{prop:dual-values}
		For the contragredient actions on $X^*$ and $Q^*$, one has
		\[
		\beta\bigl(k[X]^G\bigr)=5
		\qquad\text{and}\qquad
		\beta\bigl(k[Q]^G\bigr)=8.
		\]
	\end{proposition}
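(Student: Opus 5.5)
The plan is to follow the template of Propositions~\ref{prop1} and~\ref{prop2}, now for the contragredient modules. Here $k[X]=\Sym(X^*)=\Sym(V)$ and $k[Q]=\Sym(Q^*)=\Sym(U)$. The first step is to write down the action of $r$ and $s$ on the degree-one parts. On $V=X^*$ the matrices are the inverse transposes of $[r]_X$ and $[s]_X$ in \eqref{eq:X-matrices}, acting on the dual basis $v_0,\ldots,v_5$. On $U=Q^*$ they are the inverse transposes of \eqref{eq:Q-matrices}, acting on the dual basis $u_0,\ldots,u_4$ of $x_0,\ldots,x_4$. Since $r^4=s^2=1$, the inverse of $r$ is $r^3$, so these matrices can be computed by hand. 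I will then check that $r$ and $s$ still satisfy the $D_8$ relations; this is automatic.

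The second step is to find a homogeneous system of parameters in each ring. In a triangular (unipotent) basis, a standard choice is the norms $N_G(w)=\prod_{gG_w}g(w)$ of the basis vectors taken along a flag of submodules. I will order the basis so that each $g(w)$ differs from $w$ by earlier basis vectors. Then the common zero locus is the origin by the same successive-vanishing argument used for Proposition~\ref{prop1}, and the degrees are the orbit sizes, each a power of $2$ bounded by $8$.

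Writing $d_i$ for these degrees, Symonds' bound \cite{Sy} says the invariant ring is generated as a $P$-module in degrees $\le\sum(d_i-1)$. As in Proposition~\ref{prop2}, I will compute $\dim_{\F_2}$ of the invariants in each degree up to this bound as $\ker\Phi_d$. I will then find candidate invariants degree by degree: at each degree, take a complement of $(R_+^2)_d$ inside $R_d$. Equivalently, build the generating set greedily and stop once products of lower-degree generators span $R_d$ for all $d$ up to the Symonds bound. By the argument of Proposition~\ref{prop1}, this proves that the chosen set generates.

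The computation of $\dim R_d/(R_+^2)_d$ then gives $\beta$. For $X$, I expect the indecomposable quotient to vanish above degree $5$. For $Q$, I expect a nonzero class in degree $8$ and nothing above it.

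The main obstacle is computational size. The Symonds bound may be as large as roughly $0+\cdots+7$ summed over six parameters, for instance around $20$ for $k[X]$. In that range the number of monomials in six variables becomes large, although still feasible with exact sparse $\F_2$ linear algebra. To reduce the bound, I will first try to choose parameters of smaller degree. For example, I can use low-degree invariants such as the $f_1,f_2,\ldots$ analogues in degree $\le 2$, in the spirit of Proposition~\ref{prop2}, and verify the zero-locus condition directly. A second option is to use Kemper's algorithm \cite{Kem} in Magma \cite{mag} to produce the generators, and then certify them independently by the dimension comparison above.
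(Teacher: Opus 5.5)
Your proposal is correct and takes essentially the paper's route: write down the contragredient actions, use the orbit norms along the triangular flag as a homogeneous system of parameters, and compare $\dim R_d$ with $\dim(R_+^2)_d$ by exact row reduction up to the Symonds cutoff. For $k[X]^G$ these parameters are $y_3$, $y_5$, $y_2(y_2+y_3)$, $y_4(y_4+y_5)$, $N(y_0)$, $N(y_1)$ of degrees $1,1,2,2,4,8$; for $k[Q]^G$ they are $u_2$, $u_4$, $u_1(u_1+u_2)$, $u_3(u_3+u_4)$, $N_G(u_0)$ of degrees $1,1,2,2,8$. So the cutoffs are only $12$ and $9$, not around $20$, and the computational obstacle you anticipate (and the search for lower-degree parameters) does not arise.
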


	\begin{proof}
		It is enough to work over $\mathbb F_2$, since the fixed spaces and the relevant rank computations commute with extension of the ground field. Let $y_0,\ldots,y_5$ be the basis of $X^*$ dual to $z_0,\ldots,z_5$. The contragredient action is
		\[
		\begin{array}{c|cccccc}
			&y_0&y_1&y_2&y_3&y_4&y_5\\ \hline
			r&y_0+y_4+y_5&y_1+y_2+y_3&y_2+y_3&y_3&y_4+y_5&y_5\\[1mm]
			s&y_0&y_1+y_2+y_3+y_5&y_2&y_3&y_4+y_5&y_5.
		\end{array}
		\]
		Write $R=k[X]^G=\mathbb F_2[y_0,\ldots,y_5]^G$. Exact row reduction gives
		\[
		\begin{array}{c|rrrrrrrrrrrr}
			d&1&2&3&4&5&6&7&8&9&10&11&12\\ \hline
			\dim R_d&2&5&11&23&41&71&117&186&281&414&594&834\\
			\dim(R_+^2)_d&0&3&8&20&40&71&117&186&281&414&594&834.
		\end{array}
		\]
		The invariants
		\[
		y_3,\quad y_5,\quad y_2(y_2+y_3),\quad y_4(y_4+y_5),
		\]
		\[
		y_0(y_0+y_4)(y_0+y_5)(y_0+y_4+y_5),
		\]
		and
		\[
		\prod_{\epsilon,\delta,\eta\in\mathbb F_2}
		\bigl(y_1+\epsilon y_2+\delta y_3+\eta y_5\bigr)
		\]
		form a homogeneous system of parameters of degrees $1,1,2,2,4,8$. After the first four parameters vanish, the final two reduce to $y_0^4$ and $y_1^8$, respectively. By Symonds' bound, it is therefore enough to compute through degree
		\[
		(1-1)+(1-1)+(2-1)+(2-1)+(4-1)+(8-1)=12.
		\]
		The table shows that the highest nonzero indecomposable quotient occurs in degree $5$, and hence
		\[
		\beta\bigl(k[X]^G\bigr)=5.
		\]

		Let $u_0,\ldots,u_4$ be the basis of $Q^*$ dual to $x_0,\ldots,x_4$. The contragredient action is
		\[
		\begin{array}{c|ccccc}
			&u_0&u_1&u_2&u_3&u_4\\ \hline
			r&u_0+u_1+u_2&u_1+u_2&u_2&u_3+u_4&u_4\\[1mm]
			s&u_0+u_1+u_2+u_4&u_1&u_2&u_3+u_4&u_4.
		\end{array}
		\]
		Write $S=k[Q]^G=\mathbb F_2[u_0,\ldots,u_4]^G$. Exact row reduction gives
		\[
		\begin{array}{c|rrrrrrrrr}
			d&1&2&3&4&5&6&7&8&9\\ \hline
			\dim S_d&2&5&8&15&24&38&56&82&114\\
			\dim(S_+^2)_d&0&3&8&14&22&38&56&80&114.
		\end{array}
		\]
		The invariants $u_2,\quad u_4,\quad u_1(u_1+u_2),\quad u_3(u_3+u_4)$, together with
		\[
		\begin{aligned}
		N_G(u_0)
		&=\prod_{\epsilon,\delta,\eta\in\mathbb F_2}
		\bigl(u_0+\epsilon u_1+\delta u_2+\eta u_4\bigr)\\
		&=u_0(u_0+u_1)(u_0+u_2)(u_0+u_1+u_2)\\
		&\quad\cdot(u_0+u_4)(u_0+u_1+u_4)
		(u_0+u_2+u_4)(u_0+u_1+u_2+u_4),
		\end{aligned}
		\]
		form a homogeneous system of parameters of degrees $1,1,2,2,8$. Their common zero is the origin, so Symonds' bound gives the cutoff
		\[
		(1-1)+(1-1)+(2-1)+(2-1)+(8-1)=9.
		\]
		The table shows that the highest nonzero indecomposable quotient occurs in degree $8$. Therefore
		\[
		\beta\bigl(k[Q]^G\bigr)=8.
		\]
	\end{proof}

	\subsection*{Proof of Theorem~\ref{thm:duality}}
	Propositions~\ref{prop1} and~\ref{prop2}, together with $V=X^*$ and $U=Q^*$, give $\beta\bigl(k[X^*]^G\bigr)=5 \, \text{and}\,
	\beta\bigl(k[Q^*]^G\bigr)=6$. The proposition~\ref{prop:dual-values} gives $\beta\bigl(k[X]^G\bigr)=5 \,\text{and} \, \beta\bigl(k[Q]^G\bigr)=8$.	The assertion follows.

	\subsection*{Proof of Theorem~\ref{thm:quotient}}
	The line $L=kz_0$ is fixed by $G$, and hence $0\longrightarrow L\longrightarrow X\longrightarrow Q\longrightarrow0$ is an exact sequence of $G$-modules. By Proposition~\ref{prop:dual-values},
	\[
	\beta\bigl(k[X/L]^G\bigr)
	=\beta\bigl(k[Q]^G\bigr)
	=8>5
	=\beta\bigl(k[X]^G\bigr).
	\]

	\begin{remark}
		Since $D_8$ is a $2$-group and $\operatorname{char}(k)=2$, the trivial module is the only simple $kD_8$-module. Thus $Q$ and $Q^*$ have exactly the same composition factors, although
		\[
		\beta\bigl(k[Q]^G\bigr)=8>6=\beta\bigl(k[Q^*]^G\bigr).
		\]
		The failure of duality invariance therefore cannot be explained by a difference in composition factors.
	\end{remark}

	\section{Further consequences}

	\begin{remark}
		Consider the nonsplit exact sequence $0\longrightarrow kz_0\longrightarrow X\longrightarrow Q\longrightarrow0$. Indeed, if a $G$-equivariant section existed, a lift of $x_3=\overline z_4$ would have the form $z_4+az_0$. Since $r(x_3)=x_3$ but $r(z_4+az_0)=z_4+(a+1)z_0$, no such lift can be fixed by $r$. For the split extension $X_{\mathrm{sp}}=Q\oplus k$, one has $k[X_{\mathrm{sp}}]^G=k[Q]^G[t]$, and therefore $\beta\bigl(k[X_{\mathrm{sp}}]^G\bigr)=8>5=\beta\bigl(k[X]^G\bigr)$.
		Dualizing gives the nonsplit exact sequence $0\longrightarrow Q^*\longrightarrow X^*\longrightarrow k\longrightarrow0$.
		Its split counterpart satisfies
		\[
		\beta\bigl(k[Q^*\oplus k]^G\bigr)=6>5=\beta\bigl(k[X^*]^G\bigr).
		\]
		Thus the modular Noether number is sensitive to nonsplit extension data. In particular, it is not determined by the Jordan--H\"older factors, the semisimplification, or the associated graded module. In fact, $\operatorname{gr}X\cong Q\oplus k, \, \operatorname{gr}X^*\cong Q^*\oplus k$, while the corresponding Noether numbers are respectively $5$ versus $8$, and $5$ versus $6$.
	\end{remark}

	\subsection*{Proof of Theorem~\ref{thm:inflation}}
	Let $K$ be any field of characteristic $2$. Since taking fixed spaces commutes with extension of scalars degree by degree, the invariant rings and their indecomposable quotients are obtained from the corresponding $\mathbb F_2$-objects by tensoring with $K$. Hence all the Noether-number equalities in Theorems~\ref{thm:main}, \ref{thm:duality}, \ref{thm:quotient}, and~\ref{thm:generalized} remain valid over $K$.

	Now let $H$ be a finite group admitting a surjection $\pi:H\twoheadrightarrow D_8$. Inflate each of the $D_8$-modules along $\pi$. Since $\ker(\pi)$ acts trivially, the invariant rings are unchanged. Thus all four conclusions remain valid for $H$. This applies, in particular, to $D_8\times L$ for every finite group $L$, to all dihedral $2$-groups of order at least $8$, and to all generalized quaternion and semidihedral $2$-groups of order at least $16$.

	\subsection*{Proof of Theorem~\ref{thm:generalized}}
	By definition, $\beta_j(G,W)=\operatorname{topdeg}
	\left(\frac{\bigl(k[W]^G\bigr)_+}
	{\bigl(\bigl(k[W]^G\bigr)_+\bigr)^{j+1}}
	\right)$. Exact computations over $\mathbb F_2$ give
	\[
	\begin{array}{c|cccc}
		j&1&2&3&4\\ \hline
		\beta_j(G,V)&5&9&13&17\\
		\beta_j(G,U)&6&10&14&18.
	\end{array}
	\]
	Therefore $\beta_j(G,U)>\beta_j(G,V) \, (1\leq j\leq4)$. Thus the failure is not confined to ordinary minimal algebra generators; it persists through the first four levels of generalized indecomposability.

	\appendix
	\section{Magma verification code}\label{app:magma}
	
	\begin{lstlisting}[basicstyle=\ttfamily\footnotesize,breaklines=true,frame=single]
		k := GF(2);
		
		rz := Matrix(k,6,6,[
		1,0,0,0,0,0,
		0,1,0,0,0,0,
		0,1,1,0,0,0,
		0,0,1,1,0,0,
		1,0,0,0,1,0,
		0,0,0,0,1,1]);
		sz := Matrix(k,6,6,[
		1,0,0,0,0,0,
		0,1,0,0,0,0,
		0,1,1,0,0,0,
		0,1,0,1,0,0,
		0,0,0,0,1,0,
		0,1,0,0,1,1]);
		GV := MatrixGroup<6,k | rz,sz>;
		assert #GV eq 8;
		
		rx := Matrix(k,5,5,[
		1,0,0,0,0,
		1,1,0,0,0,
		0,1,1,0,0,
		0,0,0,1,0,
		0,0,0,1,1]);
		sx := Matrix(k,5,5,[
		1,0,0,0,0,
		1,1,0,0,0,
		1,0,1,0,0,
		0,0,0,1,0,
		1,0,0,1,1]);
		GU := MatrixGroup<5,k | rx,sx>;
		assert #GU eq 8;
		
		RV := InvariantRing(GV);
		RU := InvariantRing(GU);
		
		FV := FundamentalInvariants(RV);
		FU := FundamentalInvariants(RU);
		
		dV := Sort([ TotalDegree(f) : f in FV ]);
		dU := Sort([ TotalDegree(f) : f in FU ]);
		
		betaV := Maximum(dV);
		betaU := Maximum(dU);
		
		print "Fundamental invariants of k[V]^G:";
		print FV;
		print "Degrees:", dV;
		print "beta(k[V]^G) =", betaV;
		print "";
		print "Fundamental invariants of k[U]^G:";
		print FU;
		print "Degrees:", dU;
		print "beta(k[U]^G) =", betaU;
		print "";
		print "beta(k[U]^G) gt beta(k[V]^G):", betaU gt betaV;

		
		// Magma acts on row vectors. These matrices are the transposes
		// of the column matrices displayed in the text.
		rzDual := Transpose(rz^-1);
		szDual := Transpose(sz^-1);
		rxDual := Transpose(rx^-1);
		sxDual := Transpose(sx^-1);
		
		GX := MatrixGroup<6,k | rzDual,szDual>;
		GQ := MatrixGroup<5,k | rxDual,sxDual>;
		
		RX := InvariantRing(GX);
		RQ := InvariantRing(GQ);
		
		FX := FundamentalInvariants(RX);
		FQ := FundamentalInvariants(RQ);
		
		dX := Sort([ TotalDegree(f) : f in FX ]);
		dQ := Sort([ TotalDegree(f) : f in FQ ]);
		
		assert Maximum(dX) eq 5;
		assert Maximum(dQ) eq 8;
		
		print "Degrees for k[X]^G:", dX;
		print "beta(k[X]^G) =", Maximum(dX);
		print "Degrees for k[Q]^G:", dQ;
		print "beta(k[Q]^G) =", Maximum(dQ);
	\end{lstlisting}


\begin{thebibliography}{25}

\bibitem{mag} W.~Bosma, J.~Cannon, and C.~Playoust, The Magma algebra system I: The user language, \emph{J. Symbolic Comput.} \textbf{24} (1997), no.~3--4, 235--265, doi:10.1006/jsco.1996.0125.

\bibitem{OP} D.~L.~Wehlau, Some problems in invariant theory, in \emph{Invariant Theory in All Characteristics}, CRM Proc. Lecture Notes, vol.~35, Amer. Math. Soc., Providence, RI, 2004, pp.~265--274, doi:10.1090/crmp/035/20.

\bibitem{NP} H.~Chen and L.~Zeng, A new proof of Wehlau's conjecture for cyclic groups of prime order, \emph{Comm. Algebra} \textbf{54} (2026), no.~6, 2391--2395, doi:10.1080/00927872.2025.2580534.

\bibitem{CD} K.~Cziszter and M.~Domokos, On the generalized Davenport constant and the Noether number, \emph{Cent. Eur. J. Math.} \textbf{11} (2013), no.~9, 1605--1615, doi:10.2478/s11533-013-0259-z.

\bibitem{Fl} P.~Fleischmann, The Noether bound in invariant theory of finite groups, \emph{Adv. Math.} \textbf{156} (2000), no.~1, 23--32, doi:10.1006/aima.2000.1952.

\bibitem{F} J.~Fogarty, On Noether's bound for polynomial invariants of a finite group, \emph{Electron. Res. Announc. Amer. Math. Soc.} \textbf{7} (2001), 5--7, doi:10.1090/S1079-6762-01-00088-9.

\bibitem{G} M.~G\"obel, Computing bases for rings of permutation-invariant polynomials, \emph{J. Symbolic Comput.} \textbf{19} (1995), no.~4, 285--291.

\bibitem{HK} I.~Hughes and G.~Kemper, Symmetric powers of modular representations, Hilbert series and degree bounds, \emph{Comm. Algebra} \textbf{28} (2000), no.~4, 2059--2088, doi:10.1080/00927870008826944.

\bibitem{Noe} E.~Noether, Der Endlichkeitssatz der Invarianten endlicher Gruppen, \emph{Math. Ann.} \textbf{77} (1915), no.~1, 89--92, doi:10.1007/BF01456821.

\bibitem{Kem} G.~Kemper, Calculating invariant rings of finite groups over arbitrary fields, \emph{J. Symbolic Comput.} \textbf{21} (1996), no.~3, 351--366, doi:10.1006/jsco.1996.0017.

\bibitem{ShWe} R.~J.~Shank and D.~L.~Wehlau, Noether numbers for subrepresentations of cyclic groups of prime order, \emph{Bull. Lond. Math. Soc.} \textbf{34} (2002), no.~4, 438--450, doi:10.1112/S0024609302001054.

\bibitem{Sy} P.~Symonds, On the Castelnuovo--Mumford regularity of rings of polynomial invariants, \emph{Ann. of Math. (2)} \textbf{174} (2011), no.~1, 499--517, doi:10.4007/annals.2011.174.1.14.

\bibitem{DG} Y.~Zhang and H.~Chen, Wehlau's conjecture for the dihedral groups, \emph{J. Algebra Appl.}, to appear.
\end{thebibliography}
\end{document}